\documentclass[a4paper]{amsart}
\usepackage{amsmath, amsthm, amssymb, fullpage}

\newtheorem{prop}{Proposition}
\newtheorem{cor}{Corollary}
\theoremstyle{remark} 
\newtheorem{rem}{Remark}
\theoremstyle{definition} 
\newtheorem{defi}{Definition}
\newcommand{\sgn}{\mathrm{sgn}}
\newcommand{\Lie}{\mathit{Lie}}
\newcommand{\Ass}{\mathit{Ass}}
\newcommand{\FreeLie}{\mathrm{FreeLie}}
\newcommand{\FreeAss}{\mathrm{FreeAss}}

\title[The cubical complex of a permutation group representation]{The cubical complex of a permutation group representation -- or however you want to call it}
\author{Pavol \v Severa}
\address{P. \v S.: University of Geneva}
\author{Thomas Willwacher}
\address{T. W.:Harvard University}

\begin{document}
\maketitle
\begin{abstract}
This paper is about a small combinatorial trick, which is well known, but has no name.
Let $G$ be a permutation group acting on a vector space $M$. There is a natural way to assign a cosimplicial space to these data. We call the resulting cochain complex the cubical complex. Its cohomology is easy to compute. We give some examples of its occurrence in nature.
\end{abstract}

\section{A general combinatorial fact}
Let $k$ be a field of characteristic zero, $n\in\mathbb{N}$ and $A=k[x_1,\dots,x_n]$ the polynomial co-algebra. Let $C(A)$ be its bar complex, computing $Tor_A(k,k)$. Since $A$ is Koszul it is well known that the homology $H(C(A))=Tor_A(k,k) \cong k[\epsilon_1,\dots, \epsilon_n]$ is the Koszul dual algebra, where $\epsilon_j$ are odd (degree one) variables.

On the complex $C(A)$ there is a $\mathbb{N}^n_0$-grading by the numbers of $x_1,\dots, x_n$ occuring. Let $C(A)^{(1,\dots,1)}$ be the degree $(1,\dots,1)$ subcomplex. For instance, if $n=3$ then $x_1\otimes x_2x_3$ is inside this subcomplex, while  $x_1\otimes x_2x_1$ is not. It follows from the above that 
\begin{prop}
\[
H(C(A)^{(1,\dots,1)})=
( k[\epsilon_1,\dots, \epsilon_n])^{(1,\dots,1)}=k\,\epsilon_1\dots \epsilon_n \cong k[-n].
\]
\end{prop}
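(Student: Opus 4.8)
The plan is to avoid any direct computation with the bar differential and instead to leverage the already-established Koszul isomorphism $H(C(A)) \cong k[\epsilon_1,\dots,\epsilon_n]$, the point being that the whole construction is homogeneous for the $\mathbb{N}_0^n$-grading, so the answer for the $(1,\dots,1)$-subcomplex can simply be read off by restricting to that multidegree.

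First I would observe that the $\mathbb{N}_0^n$-grading makes $C(A)$ a \emph{direct sum of subcomplexes}. The bar differential is a signed sum of terms obtained by multiplying two adjacent tensor factors, and multiplication of monomials adds their multidegrees; hence the differential preserves the total multidegree. Thus $C(A)=\bigoplus_{\alpha\in\mathbb{N}_0^n}C(A)^{(\alpha)}$ as complexes. Since homology commutes with direct sums, the grading passes to homology and in particular
\[
H\bigl(C(A)^{(1,\dots,1)}\bigr)=H(C(A))^{(1,\dots,1)}=\bigl(\mathrm{Tor}_A(k,k)\bigr)^{(1,\dots,1)}.
\]
So it remains only to identify the multidegree-$(1,\dots,1)$ part of the Koszul dual algebra.

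For this I would track how the generators $\epsilon_j$ sit in the grading. The degree-one part $\mathrm{Tor}_1=\bar A/\bar A^2$ (where $\bar A$ is the augmentation ideal) is spanned by the classes $[x_j]$, which are precisely the generators $\epsilon_j$, and each $x_j$ has pure multidegree $e_j$; hence $\epsilon_j$ carries multidegree $e_j$. Because the product on $\mathrm{Tor}_A(k,k)$ is additive in multidegree, a monomial $\epsilon_{j_1}\cdots\epsilon_{j_p}$ has multidegree $e_{j_1}+\cdots+e_{j_p}$. The component of multidegree $(1,\dots,1)$ therefore consists of those products in which each generator occurs exactly once, and in the exterior algebra there is a unique such basis element, namely $\epsilon_1\cdots\epsilon_n$. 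This element lives in homological degree $n$, which yields the claimed $k\,\epsilon_1\cdots\epsilon_n\cong k[-n]$.

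The only genuine obstacle is justifying that the Koszul isomorphism is compatible with the $\mathbb{N}_0^n$-grading; everything else is formal. I expect this to reduce exactly to the observation above: the generators $\epsilon_j$ are the classes of the pure-multidegree elements $x_j$, and the algebra structure on $\mathrm{Tor}$ is homogeneous for the multidegree. Granting that compatibility, the proposition is just the remark that the multilinear part of an exterior algebra on $n$ generators is one-dimensional and concentrated in top degree.
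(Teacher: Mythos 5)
Your proposal is correct and follows exactly the paper's own route: the paper deduces the proposition from the Koszul isomorphism $H(C(A))\cong k[\epsilon_1,\dots,\epsilon_n]$ together with the observation that the $\mathbb{N}_0^n$-grading is preserved by the differential, so that passing to the $(1,\dots,1)$-component commutes with taking homology. Your write-up merely makes explicit (in a welcome way) the two details the paper leaves implicit, namely that the bar differential is multidegree-homogeneous and that $\epsilon_j$ carries multidegree $e_j$.
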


\begin{rem}
As noted by V. Drinfeld \cite{drinfeld}, the complex $C(A)^{(1,\dots,1)}$
is the cosimplicial complex of an $n$ dimensional hypercube, relative to its boundary. 
\end{rem}

Note further that $C(A)^{(1,\dots,1)} \cong \oplus_m (k^m)^{\otimes n}$, $m$ being the degree. Note also that the differential is invariant 
under the $S_n$ action by permuting the factors. Since taking (co-)invariants under finite group actions commutes with taking homology, we obtain:

\begin{cor}
\label{cor:main}
Let $G\subset S_n$ be a subgroup and $M$ some right $G$-module. Then 
\[
H(  M \otimes_G C(A)^{(1,\dots,1)}  ) =  M \otimes_G \sgn_n
\]
where $\sgn_n$ is the one dimensional sign representation of $G$, concentrated in degree $n$.
\end{cor}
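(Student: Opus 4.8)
The plan is to reduce everything to Proposition~1 by exploiting the semisimplicity of the group algebra in characteristic zero. Since $k$ has characteristic zero and $G$ is finite, Maschke's theorem guarantees that $kG$ is semisimple; in particular every $kG$-module is projective, so the right $G$-module $M$ is flat over $kG$. Consequently the functor $M\otimes_G(-)=M\otimes_{kG}(-)$ is exact, and an exact functor commutes with the formation of homology. Applying this to the complex $C(A)^{(1,\dots,1)}$ -- equipped with the restriction to $G$ of the $S_n$-action permuting the tensor factors, which by hypothesis commutes with the differential -- I would obtain
\[
H\bigl(M\otimes_G C(A)^{(1,\dots,1)}\bigr)\;=\;M\otimes_G H\bigl(C(A)^{(1,\dots,1)}\bigr).
\]

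Next I would identify the right-hand homology as a $G$-module. By Proposition~1 the space $H(C(A)^{(1,\dots,1)})$ is one-dimensional, spanned by the class $\epsilon_1\cdots\epsilon_n$ and concentrated in degree $n$. It then remains only to determine how $G\subset S_n$ acts on this line. The $S_n$-action permutes the variables $\epsilon_1,\dots,\epsilon_n$, which are odd, so a permutation $\sigma$ sends $\epsilon_1\cdots\epsilon_n$ to $\epsilon_{\sigma(1)}\cdots\epsilon_{\sigma(n)}=\sgn(\sigma)\,\epsilon_1\cdots\epsilon_n$. Hence the induced action on the top homology line is exactly by the sign character, so as a $G$-module $H(C(A)^{(1,\dots,1)})\cong\sgn_n$, placed in degree $n$. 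Substituting this into the displayed identity yields the claim.

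The step I expect to require the most care is this last identification of the $G$-action by signs: one must verify that the permutation action on the chain-level generators descends to homology as multiplication by $\sgn$, which is precisely where the oddness (degree-one nature) of the $\epsilon_j$ enters and where one should make sure the isomorphism of Proposition~1 is genuinely $S_n$-equivariant rather than merely an isomorphism of graded vector spaces. Everything else is a formal consequence of semisimplicity together with Proposition~1.
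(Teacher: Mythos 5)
Your proof is correct and is essentially the paper's own argument: the paper's one-line justification (``taking (co-)invariants under finite group actions commutes with taking homology'') is exactly your Maschke/exactness step, followed by Proposition~1 and the observation that permuting the odd variables $\epsilon_1,\dots,\epsilon_n$ acts by $\sgn$ on the top line. You spell out the two points the paper leaves implicit (flatness of $M$ over $kG$ and the $S_n$-equivariance of the identification $H(C(A))\cong k[\epsilon_1,\dots,\epsilon_n]$), which is a useful clarification but not a different route.
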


\begin{defi}
We call the complex $M \otimes_G C(A)^{(1,\dots,1)}$ the \emph{cubical complex} $\mathit{Cub}(G,M)$ of the pair $(G, M)$.
\end{defi}
\begin{rem}
There is a similar complex based on the Harrison complex $\mathit{Harr}(A)$, namely
\[
\mathit{Harr}(G,M) = M \otimes_G \mathit{Harr}(A)^{(1,\dots,1)},
\]
which we call the Harrison complex of the pair $(G,M)$.
It is well known that $H (\mathit{Harr}(A))\cong k^n[-1]$. Hence the following is also well known:
\[
H (\mathit{Harr}(G,M))\cong 
\begin{cases}
0 &\text{for $n>1$}\\
M[-1] &\text{for $n=1$}.
\end{cases}
\]
\end{rem}
\begin{rem}
It is actually sufficient to consider the case $G=S_n$. If $G\subsetneq S_n$, one can equivalently take $G'=S_n$, $M'= M\otimes_G k[S_n]=Ind_G^{S_n}M$.
\end{rem}

\section{Examples}

Let us take $G=S_n$ and $M=\Lie(n)$ the $n$-ary operations in the Lie operad. By definition, 
\begin{multline*}
\oplus_n\mathit{Cub}(S_n,\Lie(n))=
\oplus_n \Lie(n)  \otimes_{S_n}  C(A)^{(1,\dots,1)} =\\ =\oplus_{m,n}  \Lie(n)  \otimes_{S_n}  (k^m)^{\otimes n}
= \oplus_m \FreeLie(k^m) =: \mathfrak{lie}
\end{multline*}
is a complex built from the free Lie algebras, which one can check is the same as the one considered by A. Alekseev and C. Torossian \cite{AT}.
One hence obtains:
\begin{cor}
\[
H(\mathfrak{lie}) = \oplus_n \Lie(n) \otimes_{S_n} \sgn_n = k[-1] \oplus k[-2]. 
\]
\end{cor}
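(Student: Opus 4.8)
The plan is to establish the two equalities in turn. The first is immediate from Corollary~\ref{cor:main}: applying it with $G=S_n$ and $M=\Lie(n)$ gives $H(\mathit{Cub}(S_n,\Lie(n)))=\Lie(n)\otimes_{S_n}\sgn_n$ for each $n$, and since homology commutes with direct sums, summing over $n$ yields $H(\mathfrak{lie})=\oplus_n \Lie(n)\otimes_{S_n}\sgn_n$. I would only record the grading, which will matter at the end: by Corollary~\ref{cor:main} the $n$-th summand is concentrated in cohomological degree $n$.

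All the content is therefore in the second equality, i.e.\ in computing $\Lie(n)\otimes_{S_n}\sgn_n$ for every $n$. Over a field of characteristic zero everything is semisimple and the sign character is real-valued, so $\dim\bigl(\Lie(n)\otimes_{S_n}\sgn_n\bigr)$ equals the multiplicity of the sign representation $\sgn_n$ inside $\Lie(n)$. To compute this multiplicity I would invoke Klyachko's description of the Lie representation: $\Lie(n)\cong \mathrm{Ind}_{C_n}^{S_n}\chi$, where $C_n\subset S_n$ is the cyclic subgroup generated by an $n$-cycle and $\chi$ is a faithful one-dimensional character of $C_n$.

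By Frobenius reciprocity the multiplicity of $\sgn_n$ in $\Lie(n)$ equals $\langle \mathrm{Res}_{C_n}\sgn_n,\,\chi\rangle_{C_n}$, the inner product of two one-dimensional characters of $C_n$, hence $1$ if they coincide and $0$ otherwise. Since $\chi$ is faithful, this can be nonzero only when $\mathrm{Res}_{C_n}\sgn_n$ is itself faithful. But an $n$-cycle has sign $(-1)^{n-1}\in\{\pm1\}$, so $\mathrm{Res}_{C_n}\sgn_n$ factors through $\{\pm1\}$ and is faithful exactly when $n\le 2$; and for $n\le 2$ the group $C_n$ has a unique faithful character, which then necessarily equals $\mathrm{Res}_{C_n}\sgn_n$. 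Thus $\Lie(n)\otimes_{S_n}\sgn_n$ is one-dimensional for $n\in\{1,2\}$ and zero for $n\ge 3$. Feeding in the grading, the $n=1$ term contributes $k$ in degree $1$ and the $n=2$ term contributes $k$ in degree $2$, which is exactly $H(\mathfrak{lie})=k[-1]\oplus k[-2]$.

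I expect the main obstacle to be the uniform vanishing for all $n\ge 3$. The small cases are transparent by hand ($\Lie(1)$ and $\Lie(2)$ are the trivial and sign representations, and $\Lie(3)$ is the two-dimensional standard representation of $S_3$, which contains no sign), but disposing of every $n$ at once is precisely what Klyachko's theorem delivers. If one wishes to avoid citing it, an alternative is to read the sign-multiplicity off the known character of $\Lie(n)$ by a M\"obius/necklace computation; Klyachko's formula is simply the most economical input.
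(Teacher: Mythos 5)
Your proof is correct, including the degree bookkeeping, but the key step --- the vanishing of $\Lie(n)\otimes_{S_n}\sgn_n$ for $n\ge 3$ --- goes by a genuinely different and heavier route than the paper's. The paper disposes of all $n\ge 3$ in one line with the Jacobi identity: every multilinear Lie word is a combination of left-normed brackets, and full antisymmetrization kills any such word, because antisymmetrizing over just the three innermost letters already produces the Jacobiator $[x,[y,z]]+[y,[z,x]]+[z,[x,y]]=0$; hence $\Lie(n)$ has no fully antisymmetric part for $n\ge 3$. You instead invoke Klyachko's theorem $\Lie(n)\cong\mathrm{Ind}_{C_n}^{S_n}\chi$ (with $\chi$ a faithful character of the cyclic group) together with Frobenius reciprocity, reducing the sign multiplicity to a comparison of two one-dimensional characters of $C_n$: since an $n$-cycle has sign $(-1)^{n-1}$, the restriction of $\sgn_n$ has order at most $2$ and cannot be faithful for $n\ge 3$. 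Both arguments are valid in characteristic zero (the multiplicity is unchanged under extension of scalars, so the roots of unity implicit in $\chi$ cause no trouble). What the paper's argument buys is that it is elementary and self-contained, using nothing beyond the axioms of a Lie algebra; what yours buys is more information for the same effort, since reciprocity against $\mathrm{Ind}_{C_n}^{S_n}\chi$ computes the multiplicity of any character of $S_n$ in $\Lie(n)$ (for instance it also shows the trivial representation is absent for $n\ge 2$). For this particular corollary, though, that generality is unnecessary, and Klyachko's theorem is a much bigger hammer than Jacobi.
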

\begin{proof}
The first equality is the previous Corollary, second follows from the fact that there are no fully antisymmetric elements in $\Lie(n)$ for $n\geq 3$ by the Jacobi identity.
\end{proof}

This cohomology was computed by M. Vergne \cite{vergne} (the second part of Thm 2.3 in \cite{vergne}).

A simpler and very similar example is $G=S_n$ and $M=k[S_n]=\Ass(n)$, the space of $n$-ary operations in the associative operad. In this case $\oplus_n\mathit{Cub}(S_n,k[S_n])$ is a complex buit from free associative algebras and its cohomology is $\oplus_n k[S_n]\otimes_{S_n}\sgn_n=\oplus_n k[-n]$.
This is the first part of Theorem 2.3 in \cite{vergne}.

Next, take again $G=S_{n}$ and $M=k[S_{n}]_{C_n}$ where $C_n\subset S_n$ is the cyclic group. Equivalently, $M=\Ass(n-1)$, where we view $\Ass$ as a cyclic operad. In this case we get the complex $\mathfrak{tr}$ defined by Alekseev and Torossian \cite{AT}
\begin{align*}
\mathfrak{tr} &= \oplus_n k[S_n]_{C_n}\otimes_{S_n}  C(A)^{(1,\dots,1)}\cong  \oplus_{m,n} k[S_n]_{C_n}\otimes_{S_n}  (k^m)^{\otimes n} =  \oplus_{m,n}( (k^m)^{\otimes n})_{C_n} \\
&=  \oplus_{m} \FreeAss(k^m) / [ \FreeAss(k^m) ,  \FreeAss(k^m)].
\end{align*} 

Applying Corollary \ref{cor:main} we get
\begin{cor}
\[
H(\mathfrak{tr}) =  \oplus_n k[S_n]_{C_n}\otimes_{S_n}   \sgn_n =  \oplus_n  k \otimes_{C_n}  \sgn_n \cong \oplus_{n\,\text{odd}} k[-n]
\]
\end{cor}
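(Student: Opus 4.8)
The plan is to read the displayed formula from left to right, chaining together three separate identifications.

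First, the opening equality is nothing but Corollary \ref{cor:main} applied to the pair $(G,M)=(S_n, k[S_n]_{C_n})$. Taking $M=k[S_n]_{C_n}$ (the $C_n$-coinvariants of the regular representation, retaining its residual right $S_n$-module structure), the corollary immediately yields $H(\mathfrak{tr})=\oplus_n k[S_n]_{C_n}\otimes_{S_n}\sgn_n$, with each copy of the sign representation sitting in homological degree $n$.

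Second, I would reduce $k[S_n]_{C_n}\otimes_{S_n}\sgn_n$ to a computation over the small cyclic group alone. The tool is the standard identity that, for a subgroup $H\subset G$ and a left $G$-module $N$, one has $k[G]_H\otimes_G N\cong k\otimes_H N$. To see this, note that the canonical isomorphism $k[G]\otimes_G N\cong N$, $g\otimes v\mapsto gv$, intertwines the residual left $H$-action on $k[G]$ (which commutes with the right $G$-action used to form the tensor product) with the restriction to $H$ of the $G$-action on $N$; passing to $H$-coinvariants on both sides gives the claim. With $G=S_n$, $H=C_n$, and $N=\sgn_n$ this is exactly the second equality.

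Third, and this is where the actual content lives, I would compute $k\otimes_{C_n}\sgn_n$ by restricting the sign character to the cyclic group generated by an $n$-cycle $\sigma$. Since $\sgn(\sigma)=(-1)^{n-1}$, the restriction $\sgn_n|_{C_n}$ is the trivial character precisely when $n$ is odd and a nontrivial character when $n$ is even. For a one-dimensional representation the coinvariants are all of $k$ in the trivial case and $0$ otherwise; recalling that $\sgn_n$ lives in degree $n$, this gives $k\otimes_{C_n}\sgn_n\cong k[-n]$ for $n$ odd and $0$ for $n$ even, hence $\oplus_n k\otimes_{C_n}\sgn_n\cong\oplus_{n\text{ odd}}k[-n]$. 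The only genuinely substantive point is the parity observation $\sgn(\sigma)=(-1)^{n-1}$, which is what selects the odd values of $n$; everything else is bookkeeping, the main thing to watch being the left/right module conventions, so that the coinvariants in the second step land on the correct factor and the degree shift $[-n]$ is carried through faithfully.
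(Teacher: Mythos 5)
Your proposal is correct and follows exactly the route the paper takes (the paper simply writes ``Applying Corollary \ref{cor:main} we get'' and asserts the remaining equalities without further comment): apply the main corollary to $(S_n, k[S_n]_{C_n})$, collapse $k[S_n]_{C_n}\otimes_{S_n}\sgn_n$ to $k\otimes_{C_n}\sgn_n$, and use $\sgn(\sigma)=(-1)^{n-1}$ for an $n$-cycle to select the odd $n$. Your write-up supplies the standard justifications the paper leaves implicit, so there is nothing to correct.
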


We can also take $G=S_{n}$ and $M=\Lie(n-1)$. This is possible since $\Lie$ (-as a suboperad of $\Ass$-) is a cyclic operad. Consider 
\[
\oplus_n \Lie(n)  \otimes_{S_{n+1}}  C(A)^{(1,\dots,1)} = \oplus_{m,n}  \Lie(n)  \otimes_{S_{n+1}}  (k^m)^{\otimes n}
= \oplus_m \mathfrak{sder}_m =: \mathfrak{sder}.
\]
Note that this space coincides with the total space of the operad of Lie algebras $\mathfrak{sder}$ defined in \cite{AT}, which in turn is the same as the $\mathcal{F}$ defined in \cite{drinfeld}. By a similar argument as before, we hence obtain:

\begin{cor}
\[
H(\mathfrak{sder}) =  \oplus_n \Lie(n) \otimes_{S_{n+1}}   \sgn_{n+1} \cong k[-3].
\]
\end{cor}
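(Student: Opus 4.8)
The first equality is a direct instance of Corollary \ref{cor:main}: the ambient symmetric group is $S_{n+1}$, and the module is $M=\Lie(n)$ regarded as a right $S_{n+1}$-module via the cyclic operad structure of $\Lie$. Thus $\mathfrak{sder}=\oplus_n \mathit{Cub}(S_{n+1},\Lie(n))$, and the corollary gives $H(\mathfrak{sder})=\oplus_n \Lie(n)\otimes_{S_{n+1}}\sgn_{n+1}$, with the $n$-th summand concentrated in degree $n+1$. It remains to identify this space, i.e.\ to show that it is $k$ for $n=2$ and $0$ otherwise.

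My plan is to reduce the cyclic computation to the ordinary (non-cyclic) one already used in the computation of $H(\mathfrak{lie})$. Since $k$ has characteristic zero, $\Lie(n)\otimes_{S_{n+1}}\sgn_{n+1}$ is the multiplicity space of $\sgn_{n+1}$ inside $\Lie(n)$, so it suffices to control $\langle \Lie(n),\sgn_{n+1}\rangle_{S_{n+1}}$. Restricting the $S_{n+1}$-action along the subgroup $S_n\subset S_{n+1}$ stabilizing the extra cyclic leg recovers the ordinary $S_n$-module $\Lie(n)$, and $\sgn_{n+1}$ restricts to $\sgn_n$. Combining Frobenius reciprocity with the branching (Pieri) rule $\mathrm{Ind}_{S_n}^{S_{n+1}}\sgn_n\cong \sgn_{n+1}\oplus S^{(2,1^{n-1})}$ then yields
\[
\langle \Lie(n),\sgn_{n+1}\rangle_{S_{n+1}}+\langle \Lie(n),S^{(2,1^{n-1})}\rangle_{S_{n+1}}=\langle \Lie(n),\sgn_n\rangle_{S_n},
\]
where the right-hand side is the ordinary antisymmetric multiplicity. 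As all multiplicities are nonnegative, the cyclic multiplicity we want is bounded above by the ordinary one.

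For $n\ge 3$ the right-hand side vanishes by the same Jacobi-identity argument used above (no fully antisymmetric element of $\Lie(n)$ survives), so $\langle \Lie(n),\sgn_{n+1}\rangle_{S_{n+1}}=0$. The cases $n=1,2$ I would settle by hand, since $\Lie(1)$ and $\Lie(2)$ are one-dimensional and hence carry one of the two one-dimensional characters of $S_2$, respectively $S_3$. Using an invariant symmetric pairing, $\Lie(1)=k\cdot\mathrm{id}$ is the trivial $S_2$-representation (the pairing $\langle x_1,x_2\rangle$ is symmetric), and so contributes $0$; whereas $\Lie(2)=k\cdot[x_1,x_2]$ carries the totally antisymmetric structure constants and is therefore $\sgn_3$, contributing one copy of $k$ in degree $n+1=3$. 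Summing over $n$ gives $H(\mathfrak{sder})\cong k[-3]$.

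The main obstacle is that the Frobenius step only produces an upper bound, so the low cases $n=1,2$ — precisely where the ordinary multiplicity is nonzero — must be resolved explicitly. This forces one to commit to the concrete cyclic action and, in particular, to verify that the unary operation is \emph{symmetric} rather than antisymmetric under the leg-swap, so that $n=1$ does not spuriously contribute a $k[-2]$.
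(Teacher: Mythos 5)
Your proof is correct, and its skeleton coincides with the paper's: the paper's entire proof here is the phrase ``by a similar argument as before,'' meaning one should apply Corollary \ref{cor:main} with $G=S_{n+1}$ acting on $M=\Lie(n)$ through the cyclic structure, and then identify $\oplus_n \Lie(n)\otimes_{S_{n+1}}\sgn_{n+1}$ --- the analogue of the Jacobi-identity step in the $\mathfrak{lie}$ computation. Where you go beyond the paper is that you actually prove this identification, which the paper merely asserts. Your reduction --- restrict the cyclic module along $S_n\subset S_{n+1}$ (recovering the ordinary $S_n$-module $\Lie(n)$, by the very definition of a cyclic structure), then combine Frobenius reciprocity with the branching rule $\mathrm{Ind}_{S_n}^{S_{n+1}}\sgn_n\cong\sgn_{n+1}\oplus S^{(2,1^{n-1})}$ to bound the cyclic sign multiplicity by the ordinary one --- disposes of all $n\ge 3$ by citing exactly the fact the paper already used, namely that $\sgn_n$ does not occur in $\Lie(n)$ for $n\ge 3$. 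Your handling of the residual cases is also right: with the cyclic structure inherited from $\Lie\subset\Ass$ (the convention the paper uses, compare the $\mathfrak{tr}$ example where $C_n$ acts without signs), $\Lie(1)$ carries the \emph{trivial} $S_2$-action because the pairing $\langle x_1,x_2\rangle$ is symmetric, while $\Lie(2)$ is $\sgn_3$ because $\langle [x_1,x_2],x_3\rangle$ is totally antisymmetric; hence only $n=2$ contributes, in degree $n+1=3$, giving $k[-3]$. Your closing remark pinpoints the genuine pressure point: the upper bound from Frobenius reciprocity cannot decide $n=1$, and verifying that the unary operation is symmetric (not antisymmetric) under the leg swap is precisely what rules out a spurious $k[-2]$ and makes the paper's stated answer come out.
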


\begin{rem}
All examples above have analogs for their respective Harrison complexes. Concretely, the cohomology in this case is always given by the $n=1$-part of the cohomology computed in the above propositions. We leave the details to the reader.
\end{rem}

\section{Generalizations and further examples in the literature}
There is a generalization to the above trick. Let $\mathcal P$ be any cooperad and $A$ the free $\mathcal P$-coalgebra on $k^n$. Then $A$ is endowed with a $\mathbb{Z}^n$-grading, along with an $S_n$ action. Any ``sensible'' homology theory of $\mathcal P$-coalgebras will produce a complex $C(A)$, inheriting this grading and $S_n$ action. For any right $S_n$ module $M$ one can hence form the complex
\[
M\otimes_{S_n} C(A)^{(1,\dots,1)}
\]
with cohomology $M \otimes_{S_n} H(C(A))^{(1,\dots,1)}$. This situation occurs frequently in nature, for example, taking for $\mathcal P$ the coassociative cooperad and $C(A)$ the cyclic complex, one can produce a proof of Theorem 8 in \cite{barnatan}.
Taking $P=e_2^*\{2\}$ the co-Gerstenhaber cooperad (up to degree shift), and for $C(A)$ the Gerstenhaber cohomology, one can produce a shorter proof of Proposition 21 in \cite{thomas}. 

We want to emphasize that the combinatorial trick described here is not novel, and has been used many times in the mathematical literature, in various forms and by various authors.\footnote{In particular, D. Bar-Natan made us aware of his article \cite{barnatan2}, where the case of general $M$ is discussed as well.}
This paper was written merely to advertise a clean version of the trick and point out some further applications.

\bibliographystyle{plain}	
\bibliography{thebib}

\end{document}